\newtheorem{theo}{Theorem}
\newtheorem{cor}{Corollary}
\newcommand{\I}{\mathrm{i}}
\newcommand{\Ad}{\operatorname{Ad}}
\newcommand{\id}{\operatorname{id}}
\newcommand{\Span}{\operatorname{span}}
\newcommand{\C}{\mathbb{C}}
\newcommand{\R}{\mathbb{R}}
\newcommand{\Lie}[1]{\mathfrak{#1}}
\title{CR embeddings of CR manifolds}
\author{M. G. Cowling}
\address{School of Mathematics and Statistics\\
University of New South Wales\\
UNSW Sydney NSW 2052\\
Australia}
\email{m.cowling@unsw.edu.au}
\author{M. Ganji}
\address{School of Science and Technology\\
University of New England\\
Armidale NSW 2351\\
Australia}
\email{mganjia2@une.edu.au}
\author{A. Ottazzi}
\address{School of Mathematics and Statistics\\
University of New South Wales\\
UNSW Sydney NSW 2052\\
Australia}
\email{a.ottazzi@unsw.edu.au}
\author{G. Schmalz}
\address{School of Science and Technology \\
University of New England \\
Armidale NSW 2351 \\
Australia}
\email{schmalz@une.edu.au}
\thanks{The first-named author was supported by the Australian Research Council grant DP170103025}
\subjclass[2020]{32V30}
\keywords{CR manifold, CR embedding}
\begin{document}
\begin{abstract}
We improve results of Baouendi, Rothschild and Treves and of Hill and Nacinovich by finding a much weaker sufficient condition for a CR manifold of type $(n,k)$ to admit a local CR embedding into a CR manifold of type $(n+\ell,k-\ell)$.
While their results require the existence of a finite dimensional solvable transverse Lie algebra of vector fields; we require only a finite dimensional extension.
\end{abstract}
\maketitle

\section{Introduction and Notation}

Consider a CR manifold $(M,D,J)$ of type $(n,k)$.
This means that $M$ is a manifold of dimension $2n+k$ with a rank $2n$ distribution $D$ and a field of endomorphisms $J\colon D\to D$ such that $J^2=-\id$.
We assume that $M$ is integrable, meaning that the $-\I$ eigendistribution $D^{0,1}$ of $J$ is involutive, that is,
\[
[\Lie{X}^{0,1} , \Lie{X}^{0,1} ]\subseteq \Lie{X}^{0,1} ,
\]
where
\[
\Lie{X}^{0,1}
= \{Z\in \Gamma(D)_\C \colon Z=X+\I JX, X\in \Gamma(D)  \}.
\]
As usual, $\Gamma$ indicates a space of sections and a subscript $\C$ indicates a complexification.
We write $\Lie{X}$ for the space of vector fields on $M$.

We say that $F\colon M\to \tilde{M}$ is a CR embedding of a CR-manifold $(M,D,J)$ of type $(n,k)$ into another CR-manifold $(\tilde{M},\tilde{D},\tilde{J})$ of type $(n+\ell,k-\ell)$ if $F$ is a smooth embedding and $F_*\colon D^{0,1} \to  \tilde{D}^{0,1}$ satisfies
\[
F_* J = \tilde J F_*.
\]
The case where $\ell = k$ is of particular interest, as this corresponds to an embedding into a complex manifold.

Finding embeddings of the type envisaged above is one of the fundamental problems in CR geometry.
We are going to consider the local problem only; that is, we fix a point $p$ and look for an embedding of a neighbourhood of $p$.
This means that we can replace $M$ by a small open neighbourhood of $p$ at any stage.

We mention a few contributions that are particularly relevant.
It is well known that analytic CR-manifolds can always be locally embedded in complex space.
Baouendi, Rothschild and Treves \cite{MR809720} consider the case where there is an abelian Lie algebra $\Lie{g}$ of real vector fields that is transverse, in the sense that
\[
\Lie{X} = \Gamma(D) \oplus \Lie{g} ,
\]
and normalising, in the sense that
\begin{equation}\label{eq:normalise}
[\Lie{g}, \Lie{X}^{0,1}] \subseteq  \Lie{X}^{0,1},
\end{equation}
and construct an embedding into a complex space.
Jacobowitz \cite{MR876018} considers the case where $\ell=1$ and finds a condition for the existence of an embedding into $\C^{n+1}$.
Finally, Hill and Nacinovich \cite{MR1794546} treat the case where there is a solvable transverse normalising Lie algebra of complex vector fields of dimension $\ell$, and construct an embedding into a manifold of type $(n+\ell, k-\ell)$; they use solvability to extend by induction on dimension.
We are going to treat the case of a finite-dimensional Lie algebra extension of $\Lie{X}^{0,1}$ in $\Lie{X}_{\C}$ by nonvanishing complex vector fields $X_1$, \dots, $X_s$, and show that $M$ embeds into a CR manifold $\tilde{M}$ of type $(n+\ell, k-\ell)$ if $\dim(\Lie{X}^{(0,1)} + \langle X_1, \dots, X_s\rangle) = n+\ell$.

\section{Main results}
We state our theorem more precisely.

\begin{theo}
Let $(M, D, J)$ be a CR-manifold of type $(n,k)$.
Suppose that $X_1,\dots, X_s$ are nonvanishing complex vector fields that normalise $\Lie{X}^{0,1}$ (as in \eqref{eq:normalise}) and satisfy
\[
[X_\alpha,X_\beta]=c_{\alpha\beta}^\gamma X_\gamma \mod  {\Lie{X}}^{0,1},
\]
where the $c_{\alpha\beta}^\gamma$ are constants.
If
\[
\dim( \Lie{X}^{0,1} + \Span\{ X_1, \dots, X_s \} ) = n + \ell,
\]
then there is a (local) CR-embedding of $M$ into a CR-manifold $\tilde{M}$ of type $(n+\ell,k-\ell)$.
\end{theo}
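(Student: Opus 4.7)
The plan is to build the target manifold as a local product $\tilde M = M \times U$, where $U$ is a real $\ell$-dimensional manifold carrying a concrete realisation of the abstract Lie algebra behind the $X_\alpha$'s, and to extend the $(0,1)$-distribution by coupling the $X_\alpha$'s to vector fields on $U$.

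First I would reduce, using the dimension hypothesis, to the case in which $X_1,\dots,X_\ell$ have equivalence classes modulo $\Lie{X}^{0,1}$ that are $\C$-linearly independent at $p$; the remaining $X_\alpha$'s are then redundant. Passing to the quotient yields an abstract complex Lie algebra $\Lie{g}$ of complex dimension $\ell$ with structure constants $c^\gamma_{\alpha\beta}$. Next I would construct complex vector fields $V_1,\dots,V_\ell$ on a small neighbourhood $U$ of a point $o$ in a real $\ell$-manifold, satisfying $[V_\alpha,V_\beta] = c^\gamma_{\alpha\beta} V_\gamma$ and with $V_\alpha(o)$ a $\C$-basis of $(T_o U)_\C$. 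Setting $\tilde M := M \times U$ and regarding the $X_\alpha$, the $V_\alpha$, and the elements of $\Lie{X}^{0,1}$ as vector fields on $\tilde M$ via the respective projections to $M$ or to $U$, I would define $Z_\alpha := X_\alpha + V_\alpha$ and
\[
\tilde D^{0,1} := \Lie{X}^{0,1} + \Span_{\C}\{Z_1,\dots,Z_\ell\}.
\]
Since the $X_\alpha$ and $V_\beta$ involve disjoint coordinates, $[Z_\alpha,Z_\beta] = [X_\alpha,X_\beta] + [V_\alpha,V_\beta]$, and substituting the two bracket relations yields $[Z_\alpha,Z_\beta] \equiv c^\gamma_{\alpha\beta} Z_\gamma \pmod{\Lie{X}^{0,1}}$; the normalising condition handles $[Z_\alpha,\Lie{X}^{0,1}]$, so $\tilde D^{0,1}$ is involutive. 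The independence of the $V_\alpha(o)$ in $(T_o U)_\C$ forces $\tilde D^{0,1}$ to have complex rank $n+\ell$ at $(p,o)$, and a generic choice of the $V_\alpha(o)$ relative to $D^{0,1}_p$ and $\overline{D^{0,1}_p}$ ensures $\tilde D^{0,1}\cap\overline{\tilde D^{0,1}}=0$, giving a CR structure of type $(n+\ell,k-\ell)$ on $\tilde M$. The inclusion $F\colon p\mapsto(p,o)$ is then the sought local CR embedding, since $F_*$ tautologically carries $D^{0,1}$ into $\tilde D^{0,1}$.

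The main obstacle is producing the $V_\alpha$: this is a local Lie's-third-theorem-type statement, asserting that an abstract complex Lie algebra of complex dimension $\ell$ can be realised by complex vector fields on a real manifold of real dimension $\ell$ with prescribed values at a point. My approach would be to integrate $\Lie{g}$ to a local complex Lie group $G_\C$ of complex dimension $\ell$, take $U \subset G_\C$ to be a totally real submanifold of real dimension $\ell$ through $o = e$ (for instance the image under $\exp$ of a maximal totally real subspace of $\Lie{g}$), and let the $V_\alpha$ be the restrictions to $U$ of the right-invariant holomorphic vector fields on $G_\C$ corresponding to a $\C$-basis of $\Lie{g}$, viewed as complex vector fields on $U$ via the natural isomorphism $(T_o U)_\C \cong T_o G_\C$ that holds for totally real $U$. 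Alternatively one may solve directly the Maurer--Cartan ODE system for the matrix-valued function $V_\alpha = a^\beta_\alpha(t)\,\partial_{t_\beta}$ with prescribed invertible initial data, integrability being guaranteed by the Jacobi identity. Verifying the construction and checking the open conditions that ensure $\tilde D^{0,1}\cap\overline{\tilde D^{0,1}}=0$ constitute the delicate remaining technical steps.
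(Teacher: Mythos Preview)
Your outline is sound and shares the paper's architecture: build $\tilde M=M\times U$ with $U$ a real $\ell$-manifold, and extend $D^{0,1}$ by coupling the $X_\alpha$ to vector fields on $U$ manufactured from the Lie group integrating $\Lie g$. The one genuine difference is how involutivity is engineered. You match the bracket relations on both factors, taking $Z_\alpha=X_\alpha+V_\alpha$ with $[V_\alpha,V_\beta]=c^\gamma_{\alpha\beta}V_\gamma$, so that $[Z_\alpha,Z_\beta]\equiv c^\gamma_{\alpha\beta}Z_\gamma$. The paper instead \emph{abelianises}: it sets $Y_\alpha=\lambda^\gamma_\alpha(t)X_\gamma+\I\,\partial_\alpha$ with $t$-dependent coefficients and solves the Maurer--Cartan-type system $d\Lambda=\tfrac{\I}{2}[\Lambda,\Lambda]$ by putting $\lambda^\gamma_\alpha(t)=\omega^\gamma_\alpha(-\I t)$ for $\omega$ the Maurer--Cartan form of $G$, so that the $Y_\alpha$ \emph{commute} modulo $\Lie X^{0,1}$. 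Your totally-real-slice-of-$G_\C$ construction and the paper's Wick rotation $t\mapsto -\I t$ are the same geometric device, and since $\lambda(0)=\id$ the two families span the same bundle near the base point; these are just different frames for one distribution. The paper's use of exponential coordinates of the second kind (to make the first $\ell$ fields depend only on the first $\ell$ parameters) is replaced in your argument by reducing to a basis of $\Lie g$ at the outset, which is legitimate since the quotient has dimension $\ell$.

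Where the paper's packaging buys something is in the check $\tilde D^{0,1}\cap\overline{\tilde D^{0,1}}=0$. Because the $U$-component of $Y_\alpha$ is the fixed coordinate field $\I\,\partial_\alpha$, equating a $(0,1)$ element with a $(1,0)$ element gives $a^j=-b^j$ from the $\partial$-components and then $a^j(X_j+\overline{X_j})=0$ on the $M$ side, which is disposed of after normalising so that the $X_j(p)$ are not purely imaginary. Your appeal to a ``generic choice of the $V_\alpha(o)$'' is looser than it sounds: once the bracket relations are imposed you cannot prescribe $V_\alpha(o)$ freely by a linear change (that would alter the $c^\gamma_{\alpha\beta}$); the real freedom is the choice of totally real slice of $G_\C$. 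Picking that slice so that $V_\alpha(o)$ is purely imaginary in the chosen coordinates on $U$ (e.g.\ the ``imaginary'' slice rather than the ``real'' one) reproduces the paper's computation verbatim. You should make this explicit rather than invoke genericity.
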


\begin{proof}
Fix $p \in M$.
Without loss of generality we assume that each $X_\alpha(p)$ is not purely imaginary.
The $c_{\alpha\beta}^\gamma$ are the structure constants of the Lie algebra $\Lie{g}$ defined by
\[
\Lie{g} := (\Span\{ X_1, \dots, X_s \} + \Lie{X}^{0,1})/\Lie{X}^{0,1}.
\]

By renumbering the vector fields and passing to a submanifold of $M$ containing $p$ if necessary, we may suppose that
\[
\Lie{X}^{0,1} + \Span\{ X_1, \dots, X_s \}
= \Lie{X}^{0,1} \oplus \Span\{ X_1, \dots, X_\ell \} ,
\]
where $\ell\le s$.
We shall construct complex vector fields
\[
Y_\alpha=\lambda_\alpha^\gamma(t) X_\gamma+  \I \partial_{\alpha},
\]
where $\alpha=1,\dots, s$ on a neighbourhood of $(p,0)$ in $M\times\R^s$ such that
\begin{equation}\label{abel}
[Y_\alpha, Y_\beta] = 0 \mod \Lie{X}^{0,1};
\end{equation}
here $t^1,\dots,t^s$ are coordinates in $\R^s$ and $\partial_{\alpha}$ means ${\partial}/{\partial t^\alpha}$.
Then we shall show that the functions $\lambda_\alpha^\gamma$ can be chosen in such a way that
\begin{equation}\label{eq:depends}
\lambda_\alpha^\gamma(t^1, \dots, t^s)
= \lambda_\alpha^\gamma (t^1, \dots, t^\ell)
\end{equation}
when $\alpha\le \ell$.
It will then follow quickly that the vector fields $Y_\alpha$ with $\alpha\le \ell$ define a CR structure on $M\times V$, where $V$ is a neighbourhood of $0$ in $\R^\ell$, and there is a CR-embedding of $M$ in $M\times V$.

To show that \eqref{abel} holds, we observe that
the $Y_\alpha$ preserve the (lifted) $\Lie{X}^{0,1}$, and choose the functions $\lambda_{\alpha}^\gamma(t)$ such that $\lambda_\alpha^\gamma(0) = \delta_\alpha^\gamma$ and the $Y_\alpha$ commute modulo $\Lie{X}^{0,1}$.
Equivalently,
\begin{equation}\label{eq1}
\partial _\alpha\lambda^\gamma_\beta-\partial _\beta\lambda^\gamma_\alpha=\I \lambda_\alpha^\mu\lambda_\beta^\nu c_{\mu\nu}^\gamma.
\end{equation}

Consider this system of PDE.
Let $\{\xi_\gamma\}$ be a basis of an abstract copy of the Lie algebra $\Lie{g}$ and $(t^1,\dots, t^s)$ be coordinates of $\Lie{g}$ with respect to this basis.
Then $\Lambda := \lambda^\gamma_\alpha  \xi_\gamma dt^\alpha$ is a Lie algebra valued $1$-form, and the system \eqref{eq1} may be rewritten as
\begin{equation}\label{as}
d\Lambda=\frac{\I}{2} [\Lambda,\Lambda],
\end{equation}
where $d$ is the exterior derivative with respect to the $t$ variables.
This nonlinear autonomous system of PDE is similar to the structure equation of the Maurer--Cartan form, and this similarity allows us to solve the system \eqref{as}.
Let $\Omega$ be the left-invariant Maurer--Cartan form on the simply connected Lie group $G$ with Lie algebra $\Lie{g}$.
Then $\Omega$ satisfies the Maurer--Cartan equation
\[
d\Omega=-\frac12 [\Omega,\Omega].
\]
Let $t$ be real-analytic local coordinates on a neighbourhood of the identity $e$ in $G$ such that $0$ corresponds to $e$ and define $\Omega :=\omega^\gamma_\alpha(t) \xi_\gamma dt^\alpha$.
Then $\Omega(0)=\xi_\alpha dt^\alpha$.
Let
\[
\lambda^\gamma_\alpha(t)=\omega^\gamma_\alpha(-\I t).
\]
This is well defined since the $\omega^\gamma_\alpha$ are real-analytic, and the $\lambda^\gamma_\alpha$ defined in this way satisfy the equations \eqref{eq1} and $\omega^\gamma_\alpha(0)=\delta^\gamma_\alpha$.

To arrange that \eqref{eq:depends} holds, we suppose that $t^1$, \dots, $t^s$ are exponential coordinates of the second kind in some neighbourhood of $e$ in $G$, that is,
\[
g=\exp(t^{s}\xi_{s})\dots \exp(t^{1}\xi_1).
\]
We observe that the $dt^\alpha$ component of the Maurer--Cartan form depends on $t^1$, \dots, $t^{\alpha-1}$ only.
Indeed, the (left-invariant) Maurer--Cartan form is
\[
dL_{g^{-1}} dg
\]
and the $dt^\alpha$ component is
\begin{align*}
dL_{g^{-1}} \frac{\partial g}{\partial t^\alpha}
&=dL_{\exp(-t_1\xi_1)}\dots dL_{\exp(-t^{s}\xi_{s})} \\
& \qquad\qquad\frac{\partial}{\partial t_\alpha} L_{\exp(t^{s}\xi_{s})}\dots L_{\exp(t^{\alpha+1}\xi_{\alpha+1})} R_{\exp(t^{1}\xi_{1})}\dots R_{\exp(t^{\alpha-1}\xi_{\alpha-1}) }  \exp(t^{\alpha}\xi_{\alpha})\\
&=dL_{\exp(-t^1\xi_1)}\dots dL_{\exp(-t^{\alpha-1}\xi_{\alpha-1})}dR_{\exp(t^{1}\xi_{1})}\dots dR_{\exp(t^{\alpha-1}\xi_{\alpha-1}) }\xi_\alpha\\
&=\Ad_{\exp(-t^1\xi_1)}\dots \Ad_{\exp(-t^{\alpha-1}\xi_{\alpha-1})}\xi_\alpha.
\end{align*}
Here $L$ and $R$ denote left and right translations.
Therefore the functions $\lambda_{\alpha}^\gamma$ do indeed depend only on the variables $t^\mu$ with $\mu<\alpha$.

It follows that $\Lie{X}^{0,1} \oplus \langle Y_1,\dots, Y_\ell\rangle$ is well defined on $M\times V$, where $V$ is a suitable neighbourhood of $0$ in $\R^\ell$.
It remains to show that $\Lie{Y}^{0,1}$, the span of (the lift of) $\Lie{X}^{0,1}$ and the vector fields $Y_1,\dots,Y_\ell$ defines a CR structure on $\tilde{M} = M \times V$, that is,
\[
\Lie{Y}^{0,1}\cap\overline{\Lie{Y}^{0,1}} = \{0\}.
\]

Suppose that $V+a^j Y_j \in \Lie{Y}^{0,1}$ and  $W+b^k \overline{Y}_k \in \overline{\Lie{Y}^{0,1}}$.
If
\[
V+a^j Y_j=W+b^k \overline{Y}_k,
\]
then
\[
W-V=a^j Y_j - b^k \overline{Y}_k=0,
\]
that is $V=W=0$, and also
\[
a^j (X_j+\I \partial_j)-b^j (\overline{X}_j-\I\partial_j)=0.
\]
Therefore $a_j=-b_j$ and hence
\[
a_j (X_j+\overline{X_j})=0.
\]
Since $X_j(p)$ is not purely imaginary and the $X_j(p)$ are linearly independent, $X_j + \overline{X}_j \neq 0$ in a neighbourhood of $p$, and so (again passing to a submanifold if necessary) $a_j=0$ is the only solution.
\end{proof}

It may be worth remarking that if the CR manifold $(M,D,J)$ admits a CR embedding into a complex space, then in fact the conditions of Baouendi, Rothschild and Treves \cite{MR809720} are satisfied, and \emph{a fortiori} those of Hill and Nacinovich \cite{MR1794546}, and ours too.
It is less clear what happens when there is a CR embedding into another CR manifold that is not a complex space.

It may also be helpful to note that in the special case where the vector fields $X_1$, \dots, $X_s$ are real, then they generate (local) flows that preserve the CR structure; if they also generate a Lie algebra, then this generates a (local) group of transformations that preserves the structure.
Further, even in the more special case where there is a transverse normalising Lie algebra of real vector fields, then our result extends that of \cite{MR809720}; in this case, the use of exponential coordinates of the second kind is not necessary.

Here is a corollary of the proof of our theorem.

\begin{cor}
Let $(G,D,J)$ be a left-invariant CR structure on a Lie group $G$.
Then $G$ can be locally embedded into complex space.
\end{cor}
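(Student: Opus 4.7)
The plan is to apply the theorem with $M = G$, exploiting the left-invariance of the CR structure. Since $(D,J)$ is left-invariant, left translations on $G$ are CR automorphisms; their infinitesimal generators are right-invariant vector fields, which commute with every left-invariant vector field and therefore normalise $\Lie{X}^{0,1}$ (the $C^\infty$-module generated by a left-invariant basis $Z_1,\dots,Z_n$ of $\Lie{g}^{0,1}$).

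I would take $X_1,\dots,X_n$ to be the $Z_i$, and $X_{n+1},\dots,X_{n+k} = Y_1,\dots,Y_k$ to be right-invariant complex vector fields whose values at $e$ form a basis of a complex subspace $V \subset \Lie{g}_\C$ of dimension $k$ with $V \cap \Lie{g}^{0,1} = 0$. Nonvanishing, normalisation, and the dimension condition $\dim(\Lie{X}^{0,1} + \Span\{X_\alpha\}) = n+k$ (which holds at $e$ by transversality and persists nearby by continuity of pointwise rank) are then straightforward. The brackets $[Z_i,Z_j]$ lie in $\Lie{g}^{0,1} \subseteq \Lie{X}^{0,1}$ by integrability, and $[Z_i,Y_j]=0$ by the commutation of left- and right-invariant fields, so these bracket relations hold trivially modulo $\Lie{X}^{0,1}$.

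The main obstacle is closing the remaining brackets $[Y_j,Y_m] = -[\xi_j,\xi_m]^R$ (with $\xi_j = Y_j(e)$) as constant combinations of the $X_\gamma$ modulo $\Lie{X}^{0,1}$. A right-invariant field $\eta^R$ satisfies $\eta^R(g) = (L_g)_*(\Ad(g^{-1})\eta)$, so it lies in $\Lie{X}^{0,1}$ precisely when $\Ad(g^{-1})\eta \in \Lie{g}^{0,1}$ for all $g$ near $e$, which by differentiation reduces to $\eta$ lying in the largest ideal $\Lie{i}_0$ of $\Lie{g}_\C$ contained in $\Lie{g}^{0,1}$. Hence the bracket condition amounts to $[V,V] \subseteq V + \Lie{i}_0$, or equivalently, $V$ should project to a $k$-dimensional subalgebra of the Lie algebra $\Lie{g}_\C / \Lie{i}_0$. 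Such $V$ can always be arranged, for instance by extending $\Lie{g}^{0,1}$ to a subalgebra of $\Lie{g}_\C$ of dimension $n+k$ along a composition series of the $\Lie{g}^{0,1}$-module $\Lie{g}_\C / \Lie{g}^{0,1}$.

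Applying the theorem with $\ell = k$ then yields a local CR embedding of $G$ into a CR manifold of type $(n+k,0)$, i.e., into a complex manifold of complex dimension $n+k$, as required.
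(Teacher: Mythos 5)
Your reduction is carried out correctly up to its last step: right-invariant fields do normalise $\Lie{X}^{0,1}$, a right-invariant field $\eta^R$ lies in $\Lie{X}^{0,1}$ near $e$ exactly when $\eta$ lies in the largest ideal $\Lie{i}_0$ of $\Lie{g}_\C$ contained in $\Lie{g}^{0,1}$, and hence the hypotheses of the theorem (with $\ell=k$) come down to finding a $k$-dimensional subspace $V\subset\Lie{g}_\C$ with $V\cap\Lie{g}^{0,1}=0$ and $[V,V]\subseteq V+\Lie{i}_0$. The gap is that you have not established that such a $V$ exists for an arbitrary left-invariant CR structure, and the construction you sketch does not deliver it. First, preimages of submodules in a composition series of the $\Lie{g}^{0,1}$-module $\Lie{g}_\C/\Lie{g}^{0,1}$ are only $\ad(\Lie{g}^{0,1})$-invariant subspaces, not subalgebras, and in any case a subalgebra of dimension exactly $n+k$ containing $\Lie{g}^{0,1}$ need not exist: a subalgebra can be maximal with codimension larger than $k$ (for instance $\Lie{so}(3,\C)$ is maximal in $\Lie{sl}(3,\C)$), so the ``extend along a flag'' idea has no general justification. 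Second, and more fundamentally, even if $\Lie{h}=\Lie{g}^{0,1}\oplus V$ were a subalgebra, that only gives $[V,V]\subseteq V+\Lie{g}^{0,1}$, whereas by your own (correct) analysis the error term must lie in $\Lie{i}_0$, which is typically a much smaller space: it is $\{0\}$ whenever $\Lie{g}^{0,1}$ contains no nonzero ideal of $\Lie{g}_\C$, e.g.\ for $\Lie{g}$ simple. So what you really need is a $k$-dimensional subalgebra of $\Lie{g}_\C/\Lie{i}_0$ transverse to the image of $\Lie{g}^{0,1}$; this is immediate for $k=1$ (the bracket condition is vacuous for a single field), but for general $k$ it is an unproved, and far from obvious, algebraic assertion.

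This is also where your strategy diverges from the paper's: the corollary there is a corollary of the \emph{proof} of the theorem, not of its statement, precisely because the statement cannot be applied as a black box in the way you attempt. The paper takes as $X_1,\dots,X_s$ a full basis of the $(2n+k)$-dimensional algebra of right-invariant vector fields (which has genuinely constant structure constants, so no transverse subalgebra has to be found, but which violates the dimension hypothesis $n+\ell$ of the theorem), runs the Maurer--Cartan construction to produce the commuting fields $Y_\alpha$, and then uses exponential coordinates of the second kind, ordering the $k$ fields transverse to $D$ at $e$ first, so that $Y_1,\dots,Y_k$ are defined on $G\times\R^k$; adjoining only these to $\Lie{X}^{0,1}$ gives the complex structure. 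To repair your argument you would either have to prove the existence of the transverse subalgebra modulo $\Lie{i}_0$ in general, or follow the paper and rerun the proof of the theorem rather than invoke its statement.
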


\begin{proof}
Let $\{X_1,\dots,X_{s}\}$ be a basis of right-invariant vector fields such that $X_1$, \dots, $X_\ell$ are transverse to $D$ at $e\in G$.

As before we can find (complex) functions $\lambda_{\alpha\beta}(t)$ such that the vector fields $Y_\alpha$, given by
\[
Y_\alpha
:= \sum \lambda_{\alpha\beta}(t) X_\beta + \I \frac{\partial}{\partial t_\alpha},
\]
commute, and let $t^1,\dots,t^s$ be exponential coordinates of the second kind on $G$.
Then $\Lie{X}^{0,1} +\langle Y_1,\dots, Y_\ell\rangle$ determines an integrable complex structure on (a neighbourhood of $e \times 0$ in) $G\times \R^\ell$.
\end{proof}

Of course, this was already known, as everything is analytic in this case, but arguably this proof is simpler.

\begin{bibdiv}
\begin{biblist}

\bib{MR809720}{article}{
  author={Baouendi, M. S.},
  author={Rothschild, L. P.},
  author={Tr\`eves, F.},
  title={CR structures with group action and extendability of CR functions},
  journal={Invent. Math.},
  volume={82},
  date={1985},
  number={2},
  pages={359--396},
  issn={0020-9910},
  review={\MR{809720}},
  doi={10.1007/BF01388808},
}

\bib{MR1794546}{article}{
  author={Hill, C. D.},
  author={Nacinovich, M.},
  title={Solvable Lie algebras and the embedding of CR manifolds},
  language={English, with Italian summary},
  journal={Boll. Unione Mat. Ital. Sez. B Artic. Ric. Mat. (8)},
  volume={2},
  date={1999},
  number={1},
  pages={121--126},
  issn={0392-4041},
  review={\MR{1794546}},
}

\bib{MR876018}{article}{
  author={Jacobowitz, H.},
  title={The canonical bundle and realizable CR hypersurfaces},
  journal={Pacific J. Math.},
  volume={127},
  date={1987},
  number={1},
  pages={91--101},
  issn={0030-8730},
  review={\MR{876018}},
}

\end{biblist}
\end{bibdiv}

\end{document}